\newtheorem{theorem}{Theorem}[section]
\newtheorem{lemma}[theorem]{Lemma}
\newtheorem*{thma}{\bf Theorem A}
\newtheorem*{thmb}{\bf Theorem B}
\newtheorem*{thm1.1}{\bf Theorem 1.1}
\newtheorem*{thm1.2}{\bf Theorem 1.2 (Unconditional)}
\newtheorem*{lem5.3}{\bf Lemma 5.3}
\theoremstyle{definition}
\newtheorem{corollary}{Corollary}
\theoremstyle{remark}
\newtheorem{remark}[]{{ \bf Remark:}}
\numberwithin{equation}{section}
\begin{document}

\title{$t$-aspect subconvexity for $GL(2) \times GL(2)$ $L$-function}
\author{Ratnadeep Acharya, Prahlad Sharma and Saurabh Kumar Singh}
\address{RKMVERI, Belur Math, Howrah-711202, INDIA.}

\email{ratnadeepacharya87@gmail.com}

\address{School of Mathematics, Tata Institute of Funadamental Research, Mumbai-400005} 
\email{prahlad@math.tifr.res.in}
\address{Department of Mathematics and Statistics, Indian Institute of Technology, Kanpur-208016}
\email{skumar.bhu12@gmail.com}

\begin{abstract}
In this paper we shall prove a subconvexity bound for $GL(2) \times GL(2)$ $L$-function in $t$-aspect  by using a $GL(1)$ circle  method. 
\end{abstract}

\maketitle {}

\section{Introduction} 
    One of the interesting problem in analytic number theory is to bound $L$-function on the critical line.  Such a bound may involve one or more parameters. In this article we shall consider the problem for bounding $GL(2) \times GL(2)$ $L$-function in $t$-aspect.  Let $f$ be an Hecke eigenform for the full modular group $SL(2, \mathbb{Z})$ of weight $k$, $g$ be either a Hecke eigenform of weight $k^\prime$ or a weight zero Maass cusp form. The Rankin-Selberg convolution is defined by 
        \begin{align*}
            L(s, f \otimes g) = \zeta(2s) \sum_n \lambda_f(n) \lambda_g(n) n^{-s} \ \ \  (\Re s >1),
        \end{align*} where $ \lambda_f(n)$ and $\lambda_g(n)$ denote the normalised $n$-th Fourier coefficient of $f$ and $g$ respectively.      This extends to a entire function and satisfies a functional equation relating $s$ with $1-s$ (see subsection \ref{rankin section}). The Phragmén-Lindelöf principle gives us the convexity bound $ L(1/2+it, f \otimes g) \ll (|t|+ 10) ^{1+ \epsilon} $.  The Lindel{\" o}f hypothesis predicts that the exponent $1$ can be replace by $0$. 
%The completed $L$ function is defined by 
%\begin{align*}
%    \Lambda(s, f \otimes g) = \frac{1}{(2 \pi)^{2s}} \Gamma \left(s + \frac{k+l}{2}-1 \right)\Gamma \left(s + \frac{k-l}{2} \right)L(s, f \otimes g).
%\end{align*}
% This satisfies the following functional equation: 
% \begin{align*}
%      \Lambda(s, f \otimes g)= \Lambda(1-s, f \otimes g).
% \end{align*}
Our result is the following theorem: 
\begin{theorem}\label{main}
\[L \left(\frac{1}{2}+it, f \otimes g\right) \ll_\epsilon |t|^{1- 1/16 + \epsilon}.\]
\end{theorem}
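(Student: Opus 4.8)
\emph{Proof strategy.} The plan is to combine the approximate functional equation with the $GL(1)$ delta-method (circle method) of Duke--Friedlander--Iwaniec, followed by two Voronoi summations and a Cauchy--Schwarz plus Poisson argument.

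First I would invoke the approximate functional equation for the degree-four $L$-function $L(s,f\otimes g)$, whose analytic conductor in the $t$-aspect is $\asymp |t|^{4}$. This reduces the theorem to bounding the smooth sum
\[
S(N)=\sum_{n}\lambda_f(n)\lambda_g(n)\,n^{-it}\,V\!\left(\frac{n}{N}\right),\qquad N\asymp t^{2},
\]
for a fixed smooth compactly supported $V$, where the convexity bound corresponds to the trivial estimate $S(N)\ll N^{1+\epsilon}=t^{2+\epsilon}$. Since $L(1/2+it,f\otimes g)\ll t^{\epsilon}\,N^{-1/2}\sup|S(N)|+t^{-A}$, it suffices to save a factor $t^{1/16}$ over the trivial bound, i.e.\ to prove $S(N)\ll N^{1-1/32+\epsilon}=t^{2-1/16+\epsilon}$ at the critical length $N\asymp t^{2}$, shorter ranges being easier.

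The central idea is to break the correlation $\lambda_f(n)\lambda_g(n)$. I would write $S(N)=\sum_{n,m}\lambda_f(n)\lambda_g(m)\,n^{-it}\,\delta(n-m)\,V(n/N)W(m/N)$ and expand the Kronecker symbol $\delta(n-m)$ by the $GL(1)$ circle method, with modulus $Q$ and a conductor-lowering integral of length $K$ exploiting the archimedean oscillation of $n^{-it}$, so that one may take $Q\asymp\sqrt{N/K}$. After opening the additive characters $e(a(n-m)/q)$ this separates the two variables, producing an $n$-sum twisted by $e(an/q)\,n^{-it}$ and an $m$-sum twisted by $e(-am/q)$, together with an average over $q\le Q$, over $a\bmod q$, and over the auxiliary integrals. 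The two shapes of $g$ (holomorphic or Maass) are handled uniformly, the only difference being the gamma factors in Voronoi. Next I would apply the $GL(2)$ Voronoi summation formula to both inner sums: the $g$-sum carries no $t$-oscillation and its dual length collapses to $\asymp q^{2}/N$, becoming very short, whereas for the $f$-sum the factor $n^{-it}$ governs the stationary phase and the dual length is $\asymp q^{2}t^{2}/N$, reflecting the full conductor; the residue sum over $a\bmod q$ collapses to a Ramanujan-type (or Kloosterman) sum in the two dual variables.

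At this stage I would apply Cauchy--Schwarz in the $f$-dual variable to eliminate $\lambda_f$ via the Rankin--Selberg bound $\sum_{n\le X}|\lambda_f(n)|^{2}\ll X^{1+\epsilon}$, open the resulting square, and apply Poisson summation in that variable modulo the relevant product of moduli. The zero frequency produces the diagonal main term, and the nonzero frequencies produce an off-diagonal governed by a count of solutions to a congruence condition weighted by the character sums. The hard part is precisely this final balance: controlling the off-diagonal requires evaluating the complete character sums (via reciprocity and the Kloosterman/Ramanujan sum structure) and counting the associated frequencies, and then optimizing the free parameters $Q$ and $K$ so that the diagonal and off-diagonal contributions are of comparable size. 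Verifying that this optimum yields the saving $t^{1/16}$ over convexity --- rather than being swamped by the off-diagonal --- is the main technical obstacle; everything else reduces to routine (if lengthy) stationary-phase analysis of the Voronoi and Poisson integral transforms.
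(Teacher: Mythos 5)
Your overall architecture coincides with the paper's: approximate functional equation reducing to $S(N)$ with $N\ll t^{2+\epsilon}$, the Duke--Friedlander--Iwaniec expansion of $\delta(n-m)$ with Munshi-style conductor lowering via a $\nu$-integral of length $K$ and $Q=\sqrt{N/K}$, Voronoi on both dual sums, the $a$-sum collapsing to a Ramanujan sum, and then Cauchy--Schwarz followed by Poisson in the long ($t$-conductor) dual variable --- all of this matches, up to your relabeling of which form carries the twist $n^{-it}$. However, there is a genuine quantitative error in your Voronoi bookkeeping. You claim the sum without the $t$-twist ``carries no oscillation and its dual length collapses to $\asymp q^{2}/N$, becoming very short.'' This ignores the price of conductor lowering: the inserted factor $(m/n)^{i\nu}$ with $|\nu|\sim K$ endows \emph{both} sums with archimedean oscillation of size $K$, so the dual length of that sum is $\asymp q^{2}K^{2}/N+K\asymp K=t^{3/4}$ at $q\sim Q$ (the paper's $N_0=(qK)^2/N+K$), not $q^{2}/N\ll 1/K$. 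A sanity check exposes the inconsistency: with your count, the combined savings from the two Voronoi applications, the Ramanujan sum, and the $\nu$-integral would exceed the required $N$ by a power of $K$, and subconvexity would follow essentially for free --- whereas the correct count puts one exactly on the boundary $N^{1/2}t^{1+\epsilon}$, which is why the Cauchy--Poisson step is needed at all.

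This is not a cosmetic slip, because the endgame you defer as ``the main technical obstacle'' hinges on that length. After Cauchy--Schwarz in the long dual variable (of size $M_0\asymp t^{2}/K$) and Poisson modulo $d$ --- note it is a single divisor $d\mid q$ coming from $R_q(m-n)=\sum_{d\mid(q,m-n)}d\,\mu(q/d)$, not a ``product of moduli,'' and no Kloosterman sums arise --- the zero frequency yields a near-diagonal $n_1\equiv n_2\ (d)$, $n_1-n_2\ll t^{\epsilon}$, over a variable of length $N_0\asymp K$; its contribution is acceptable only if $K\gg t^{2/3}$, while the nonzero frequencies, after stationary phase giving $\mathcal{I}(k,n_1,n_2,q)\ll t^{\epsilon-1}\sqrt{d/(kM_0)}$, save $\sqrt{t/K}$ and force $K\ll t$. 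Equalizing gives $K=t^{3/4}$ and, after the square root inherent in Cauchy, exactly the saving $t^{1/16}$. With your dual length $O(1)$ for the short sum, this diagonal/off-diagonal dichotomy cannot even be formulated, so the proposal as written neither verifies nor could produce the exponent $1-1/16$. (Minor point: only $K$ is a free parameter; $Q$ is determined as $\sqrt{N/K}$.)
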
 Our method does not depend on the cuspidality $f$ or $g$ (or both). Hence by replacing $g$ to be the Eisenstein series we obtain following corollary :
\begin{corollary}
\[L \left(\frac{1}{2}+it, f\right) \ll_\epsilon |t|^{1/2- 1/32 + \epsilon}.\]
\end{corollary}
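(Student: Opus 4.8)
The plan is to specialize the second factor $g$ in Theorem \ref{main} to an Eisenstein series and to exploit the resulting factorization of the degree-four $L$-function into a product of two copies of the degree-two $L$-function $L(s,f)$. The crucial algebraic input is the identity, valid for the unitary Eisenstein series $E_\alpha = E(\cdot,\tfrac12+i\alpha)$ whose Hecke eigenvalues are $\lambda_{E_\alpha}(n) = \sum_{ab=n}(a/b)^{i\alpha}$, namely
\[
L(s, f\otimes E_\alpha) \;=\; \zeta(2s)\sum_n \lambda_f(n)\,\lambda_{E_\alpha}(n)\,n^{-s} \;=\; L(s+i\alpha,f)\,L(s-i\alpha,f),
\]
which follows by comparing Euler factors, the Eisenstein series being the isobaric datum $|\cdot|^{i\alpha}\boxplus|\cdot|^{-i\alpha}$. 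First I would take $\alpha=0$, so that $\lambda_{E_0}(n)=d(n)$ is the divisor function and the identity collapses to the classical Rankin formula
\[
L(s,f\otimes E_0) \;=\; \zeta(2s)\sum_n \frac{\lambda_f(n)\,d(n)}{n^{s}} \;=\; L(s,f)^2 .
\]

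Next I would observe that the proof of Theorem \ref{main} nowhere uses the cuspidality of $g$: the only arithmetic inputs concerning $g$ are the control on the size of its coefficients $\lambda_g(n)$ and the analytic continuation and functional equation of $L(s,f\otimes g)$. For $g=E_0$ these are supplied with room to spare — the divisor bound $d(n)\ll_\epsilon n^\epsilon$ is the known, unconditional Ramanujan bound, and $L(s,f)^2$ is entire because $f$ is cuspidal, so the requisite continuation and the functional equation relating $s$ with $1-s$ hold with no pole intervening (the Rankin–Selberg pole at $s=1$ never appears since $f\not\cong E_0$). Hence the same $GL(1)$ circle-method argument delivers the bound of Theorem \ref{main} verbatim in the Eisenstein case:
\[
L\!\left(\tfrac12+it,\, f\otimes E_0\right) \;\ll_\epsilon\; |t|^{\,1-1/16+\epsilon}.
\]

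Combining the two displays gives $L(\tfrac12+it,f)^2 \ll_\epsilon |t|^{\,1-1/16+\epsilon}$, and taking square roots yields $L(\tfrac12+it,f)\ll_\epsilon |t|^{\,1/2-1/32+\epsilon}$, since $\tfrac12\bigl(1-\tfrac1{16}\bigr)=\tfrac12-\tfrac1{32}$, which is exactly the asserted bound. I expect the only point requiring genuine care — rather than a true obstacle — to be the verification that each step in the proof of Theorem \ref{main} survives the substitution $g\mapsto E_0$, i.e.\ that the various integral transforms, the stationary-phase analysis, and the final estimate are uniform enough that the fixed, non-cuspidal datum $E_0$ (with its fixed parameter $\alpha=0$) does not degrade the bound; it does not, because the entire $t$-dependence is driven by $f$ and by the analytic conductor in the $t$-aspect. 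One could equally keep $\alpha$ fixed and nonzero, but the choice $\alpha=0$ is what isolates a single factor $L(\tfrac12+it,f)$ directly, so I would record the corollary in that form.
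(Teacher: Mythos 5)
Your proposal is correct and is precisely the paper's route: the authors obtain the corollary by observing that their circle-method argument never uses cuspidality of $g$, substituting the Eisenstein series (divisor coefficients $d(n)$, for which their Voronoi lemma explicitly provides the relevant transforms), invoking the factorization $L(s,f\otimes E_0)=L(s,f)^2$, and taking square roots so that $\frac12\left(1-\frac1{16}\right)=\frac12-\frac1{32}$. You have merely made explicit the details the paper compresses into a single remark, so there is nothing to add or correct.
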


We briefly recall the history of $t$-aspect subconvexity bounds. It was first proved by Hardy-Littlewood and Weyl in the case of Riemann zeta function with exponent $1/6$. This was later improved by several eminent mathematicians. In case of degree two $L$-function, subconvexity bound was first obtained by Anton Good \cite{GOOD} by using spectral theory of automorphic forms. X. Li \cite{li} proved subconvexity bound for self-dual degree three $L$-function by using moment method, where the positivity of central value was essential. For general degree three $L$-function, subconvexity bound was obtain by Ritabrata Munshi \cite{RM} by using his conductor lowering trick.  K. Aggarwal and S. K. Singh \cite{RM} adopted conductor lowering trick for $GL(2)$ $L$-functions, and they proved Weyl bound for the same. In their fundamental work, P. Michel and A. Venkatesh \cite{venkatesh} have proved a subconvexity bound for general $GL(2)$ and $GL(2) \times GL(2)$ $L$-functions. They proved following two theorem:
\begin{thma}
There is an absolute constant  $\delta >0$ such that: for an automorphic repre-
sentation of $GL_1(\mathbb{A}) $ or $GL_2(\mathbb{A}) $ (with unitary central character), one has
\begin{equation*}
L (1/2, \pi )\ll_{\mathbb{F} }  C(\pi)^{1/4 - \delta}.
\end{equation*}
\end{thma}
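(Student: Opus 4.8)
The plan is to prove the bound uniformly in every aspect of the conductor $C(\pi)$ by converting the central value into an automorphic period and then estimating an \emph{amplified second moment} whose expansion is controlled by the spectral gap of the adelic quotient. The starting point is an integral representation: for $\pi$ on $GL_2(\mathbb{A})$ with normalized new (or spherical) vector $\phi$, one has a period functional $\ell(\phi)$ — realized either by unfolding a Rankin--Selberg integral against an Eisenstein series or, via Waldspurger/Hecke, by a torus period — satisfying
\[ |\ell(\phi)|^2 = L(1/2,\pi)\cdot(\text{explicit local factors}), \]
where the local factors grow at most polynomially in $C(\pi)$; for $GL_1$ the analogous identity is classical. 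Thus subconvexity for $L(1/2,\pi)$ reduces to a subconvex estimate for the normalized period $|\ell(\phi)|^2/\|\phi\|^2$.

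Rather than bound $\ell(\phi)$ directly, I would embed $\pi$ into a family and estimate an amplified second moment. Applying a short Hecke amplifier $T=\sum_{p\le L}x_p\,\lambda_\pi(p)\,T_p$, tuned so that $T\phi$ is essentially a large multiple of $\phi$, and opening $|\ell(T\phi)|^2$ through the period identity, one arrives at an integral over the quotient $X=GL_2(\mathbb{F})\backslash GL_2(\mathbb{A})$ of the kernel $\phi\otimes\bar\phi$ over the diagonal orbit of $H\times H$ inside $G\times G$, where $H$ is the relevant subgroup (torus or mirabolic). The Hecke relations spread this over a union of \emph{translated} orbits $H\,g_n$ indexed by the amplifier support. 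The identity translate $g_n=1$ reconstitutes the diagonal contribution, which reproduces the convexity bound; all the savings must come from the off-diagonal terms $g_n\ne1$.

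The heart of the matter is that each off-diagonal term is governed by how far the translated orbit $H\,g_n$ is from equidistribution in $X$, and this deviation is quantified by the decay of matrix coefficients of the regular representation of $G(\mathbb{A})$ on $L^2_0(X)$. The essential analytic input is therefore an \emph{effective, uniform} spectral gap of the shape
\[ \big|\langle g\cdot v,\,w\rangle\big| \ll \Xi(g)^{2\theta}\,\mathcal{S}(v)\,\mathcal{S}(w), \]
with $\theta<1/2$ supplied by the best known approximation to Ramanujan (property $(\tau)$ and the Kim--Sarnak bound at the finite places, and unitarity/near-temperedness of the continuous and cuspidal spectrum at the archimedean place), where $\Xi$ is the Harish-Chandra function and $\mathcal{S}$ a Sobolev-type norm. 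Summing the resulting decay over the amplifier-indexed translates and balancing the amplifier length $L$ against $C(\pi)$ produces a power saving $C(\pi)^{-\delta}$ over the diagonal term, with $\delta>0$ depending only on the gap $\tfrac12-\theta$.

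The main obstacle — and the reason a single absolute $\delta$ can be extracted in \textbf{all} aspects at once — is making the equidistribution rate simultaneously effective and uniform in the conductor. As $C(\pi)$ grows, both the test vector $\phi$ and the subgroup $H$ deform (the archimedean $t$-aspect, the level aspect, and the character aspect each move different local data), so the Sobolev norms $\mathcal{S}(v),\mathcal{S}(w)$ and the local weight factors in the period identity must be bounded by explicit powers of the local conductors, and these polynomial losses must be dominated by the geometric gain from the matrix-coefficient decay. Carrying this out demands a careful place-by-place analysis together with the adelic mixing estimate above; the archimedean place, where the gap comes from spectral temperedness rather than a $p$-adic Hecke bound, is where the uniformity is most delicate. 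Once the gain strictly exceeds the loss for an admissible amplifier length, the theorem follows with an absolute $\delta>0$.
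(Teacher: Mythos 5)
You should first note a structural point: this paper does not prove Theorem A at all. It is quoted verbatim from Michel--Venkatesh \cite{venkatesh} as background motivation, and the paper's own machinery (delta method with conductor lowering, double Voronoi, Cauchy--Schwarz plus Poisson) is aimed solely at the $t$-aspect of $GL(2)\times GL(2)$ and is logically independent of Theorem A. So the only meaningful comparison is with the cited source, and at the level of architecture your sketch is faithful to it: Michel--Venkatesh do express central values through automorphic periods, reduce subconvexity to effective equidistribution of translated $H$-orbits (torus or unipotent/mirabolic) on $X=GL_2(\mathbb{F})\backslash GL_2(\mathbb{A})$, and quantify the off-diagonal decay by uniform bounds on matrix coefficients of $L^2_0(X)$ of exactly the shape $\langle g\cdot v,w\rangle\ll \Xi(g)^{2\theta}\mathcal{S}(v)\mathcal{S}(w)$, with $\theta<1/2$ from property $(\tau)$ and Kim--Sarnak, tracked through an adelic Sobolev-norm formalism. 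Those are genuinely their main tools, and you have correctly identified the archimedean uniformity as the delicate point.

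There are, however, two substantive gaps between your proposal and an actual proof. First, the logical route: Michel--Venkatesh prove the Rankin--Selberg statement (Theorem B) and \emph{deduce} Theorem A by specializing one factor to a unitary Eisenstein series, so that quantities like $|L(1/2,\pi\otimes\chi)|^{2}$ or $|L(1/2,\chi)|^{2}$ appear as factors of a $GL_2\times GL_2$ central value; your plan to amplify a $GL_1$ or standard $GL_2$ period directly is not their argument, and for the standard $L$-function the relevant Hecke (mirabolic) period diverges once Eisenstein spectrum enters, forcing the regularization theory that occupies a significant portion of their paper and which your sketch never mentions. Second, your two load-bearing assertions --- that the local factors in the period identity are polynomial in $C(\pi)$ uniformly in all aspects (a test-vector problem), and that ``the gain strictly exceeds the loss for an admissible amplifier length'' --- are precisely the content of the proof rather than reductions of it; moreover the amplifier $\sum_{p\le L}x_p\lambda_\pi(p)T_p$ as written can degenerate, since $\lambda_\pi(p)$ may be small for almost all $p\le L$, so one needs the quadratic Hecke relation $\lambda_\pi(p)^2-\lambda_\pi(p^2)=1$ in the style of Iwaniec--Sarnak, or, as Michel--Venkatesh in effect do, one replaces classical amplification by mixing along a large translate together with an inductive structure on conductors. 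As it stands, your text is an accurate description of the Michel--Venkatesh program, but every quantitative step that separates success from failure is asserted rather than established.
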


\begin{thmb}
There is an absolute constant $\delta >0$ such that: for $\pi_1, \pi_2$ automorphic representations on $GL_2(\mathbb{A}_{\mathbb{F} } )$ we have
\begin{equation*}
L (1/2, \pi_1 \otimes \pi_2 )\ll_{\mathbb{F} }  C(\pi)^{1/4 - \delta};
\end{equation*} more precisely, the implied constant  depends polynomially on the discriminant of $\mathbb{F} $ ( for  $\mathbb{F} $  varying over fields of given degree) and on $C(\pi_2)$.
\end{thmb}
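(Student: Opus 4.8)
The plan is to prove the statement by the period--amplification--equidistribution strategy, which, unlike a delta-method attack, treats the full conductor $C(\pi_1)$ uniformly and is insensitive to the arithmetic of $\mathbb{F}$ beyond a polynomial dependence on $\mathrm{disc}(\mathbb{F})$. First I would realize the central value as a period. By the Rankin--Selberg integral representation, for suitable test vectors $\varphi_1 \in \pi_1$ and $\varphi_2 \in \pi_2$ one has an identity of the shape
\begin{equation*}
L(1/2, \pi_1 \otimes \pi_2) = \frac{1}{(\text{local factors})} \left| \int_{[PGL_2(\mathbb{A}_{\mathbb{F}})]} \varphi_1(g)\, \overline{\varphi_2(g)}\, E(g, 1/2)\, dg \right|,
\end{equation*}
where $E(\cdot, s)$ is the standard Eisenstein series and the local factors are computed explicitly from the chosen Whittaker data. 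The test vectors should be newvectors at the finite places dividing $C(\pi_1)C(\pi_2)$ and suitably microlocalised (translates of a fixed smooth vector) at the archimedean places, so that the period faithfully encodes the $L$-value with local constants controlled polynomially in $\mathrm{disc}(\mathbb{F})$ and $C(\pi_2)$.

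Next I would amplify in the $\pi_1$-aspect. Embedding $\pi_1$ in the full automorphic spectrum of $[PGL_2(\mathbb{A}_{\mathbb{F}})]$ and attaching the period functional $\ell(\varphi) = \int \varphi\, \overline{\varphi_2}\, E(\cdot, 1/2)$, I form the amplified second moment
\begin{equation*}
\sum_{\pi} \left| \sum_{N\mathfrak{l} \sim L} x_{\mathfrak{l}}\, \lambda_\pi(\mathfrak{l}) \right|^2\, |\ell(\varphi_\pi)|^2,
\end{equation*}
the sum running over an orthonormal spectral family, with amplifier coefficients $x_{\mathfrak{l}}$ (supported on prime ideals coprime to the conductors) chosen so that the $\pi_1$-contribution is isolated with amplification factor $\gg L^{1-\epsilon}$. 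Expanding the amplifier via the Hecke multiplicativity relations replaces $|\sum x_{\mathfrak{l}}\lambda_\pi(\mathfrak{l})|^2$ by a linear combination of single Hecke eigenvalues $\lambda_\pi(\mathfrak{n})$ with $N\mathfrak{n} \lesssim L^2$.

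The decisive step is the geometric interpretation via spectral completeness. By Parseval on $[PGL_2(\mathbb{A}_{\mathbb{F}})]$, the amplified moment collapses to a \emph{geometric} quantity,
\begin{equation*}
\sum_{N\mathfrak{n} \lesssim L^2} y_{\mathfrak{n}} \int_{[PGL_2(\mathbb{A}_{\mathbb{F}})]} \bigl( T_{\mathfrak{n}}\, \Psi \bigr)(g)\, \overline{\Psi(g)}\, dg,
\end{equation*}
where $\Psi$ is the automorphic function built from $\varphi_2$ and the Eisenstein series and $T_{\mathfrak{n}}$ is the Hecke operator of norm $N\mathfrak{n}$. The diagonal term $\mathfrak{n} = (1)$ reproduces the convexity-strength main term of size $\sim L \cdot C(\pi_1)^{o(1)}$. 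The off-diagonal terms are controlled by the \textbf{decay of matrix coefficients}: the inner product $\langle T_{\mathfrak{n}}\Psi, \Psi\rangle$ is small because the Hecke translate $T_{\mathfrak{n}}\Psi$ becomes quantitatively equidistributed, the rate governed by the uniform spectral gap for $GL_2$ (a polynomial bound towards Ramanujan, e.g.\ the Kim--Sarnak exponent, which suffices). This yields a saving of a fixed power of $N\mathfrak{n}$, and hence of $L$, over each off-diagonal term after a diophantine count of the Hecke correspondence's returns to small neighbourhoods. Balancing the amplification gain against the off-diagonal loss and optimising $L$ produces the power saving $\delta > 0$ in $L(1/2, \pi_1\otimes\pi_2) \ll C(\pi_1)^{1/4 - \delta}$.

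The hard part is uniformity. The soft structure above must be made quantitative with implied constants depending only polynomially on $\mathrm{disc}(\mathbb{F})$ and on $C(\pi_2)$, which forces a careful local analysis at \emph{every} place simultaneously. At the archimedean places one needs Sobolev-norm estimates for the decay of matrix coefficients that are uniform in the conductor and in the microlocalisation of the test vectors; at ramified finite places one needs the precise local Rankin--Selberg constants for newvectors; and the geometric count of small returns of the Hecke correspondence must track the field discriminant through the covolume of the relevant lattice. It is this book-keeping of local--global uniformity --- rather than the conceptual skeleton of period, amplification, and equidistribution --- that constitutes the genuine obstacle, and it is precisely where the polynomial dependence on $\mathrm{disc}(\mathbb{F})$ and $C(\pi_2)$ is earned.
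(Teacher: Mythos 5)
You should first be aware that the paper does not prove Theorem B at all: it is quoted verbatim from Michel--Venkatesh \cite{venkatesh} as background, and the paper's own machinery (delta method with conductor lowering, double Voronoi summation, Cauchy--Schwarz plus Poisson) is deployed only for the $t$-aspect special case $L(1/2+it, f\otimes g)$ of Theorem \ref{main}, which is strictly weaker than Theorem B's uniform conductor-aspect statement. So there is no in-paper proof to compare against; the relevant comparison is with \cite{venkatesh} itself, and there your proposal is in substance a correct reconstruction of the actual argument: realization of the central value as a Rankin--Selberg period against an Eisenstein series, amplification in the $\pi_1$-aspect, spectral expansion of the amplified second moment into a geometric quantity $\sum_{\mathfrak{n}} y_{\mathfrak{n}} \langle T_{\mathfrak{n}}\Psi, \Psi\rangle$, off-diagonal control by quantitative mixing (decay of matrix coefficients with a polynomial spectral gap of Kim--Sarnak quality), and uniformity carried by a canonical Sobolev-norm formalism. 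Your closing paragraph also correctly locates where the real difficulty lives. By contrast, the delta-method route of the present paper buys an explicit exponent ($\delta = 1/16$ in the $t$-aspect) at the cost of all uniformity in level, field, and $\pi_2$; the period route buys the uniformity Theorem B asserts but yields an unspecified $\delta$.

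That said, treated as a proof rather than a program, your sketch has concrete gaps. First, $\Psi = \overline{\varphi_2}\, E(\cdot, 1/2)$ is not square-integrable on $[PGL_2(\mathbb{A}_{\mathbb{F}})]$, so the ``Parseval'' step collapsing the amplified moment to $\langle T_{\mathfrak{n}}\Psi, \Psi\rangle$ is not available as stated; Michel--Venkatesh must develop regularized inner products and a regularized Plancherel formula, and the case of non-cuspidal $\pi_2$ (needed for the full strength of Theorem B, including its GL(1)-twist consequences) requires a separate and genuinely delicate regularization. Second, identifying the diagonal $\mathfrak{n}=(1)$ with a convexity-strength main term presupposes \emph{lower} bounds for the local Rankin--Selberg integrals attached to your newvector-plus-microlocalized test data --- the test-vector problem at ramified and archimedean places --- without which $|\ell(\varphi_{\pi_1})|^2$ could degenerate relative to $L(1/2, \pi_1\otimes\pi_2)$ and the whole moment bound proves nothing about the $L$-value. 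Third, the amplifier must be protected against $\lambda_{\pi_1}(\mathfrak{l})$ being small for all $N\mathfrak{l}\sim L$; the standard repair uses the Hecke relation $\lambda(\mathfrak{l})^2 - \lambda(\mathfrak{l}^2) = 1$ to amplify over $\{\mathfrak{l}, \mathfrak{l}^2\}$ jointly, and this should be built into the choice of $x_{\mathfrak{l}}$ rather than assumed. None of these is fatal --- each is resolved in \cite{venkatesh} --- but they are exactly the content of the theorem, and your outline defers all of them.
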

As a consequence of Theorem $B$, we obtain that $L(1/2+ it, f \otimes g) \ll |t|^{1 - \delta}$ for some positive $\delta$. In our theorem we shall prove that $\delta$ can be taken $0\leq  \delta < 1/16$. 

% In the case of degree three $L$-function, it was proved by X. Li for 

\section{The Set up}
   By means of approximate functional equation and dyadic subdivision, we have 
   \begin{align}\label{afe}
     L \left(\frac{1}{2}+it, f \otimes g \right) \ll_{\epsilon, A} \sup_{N \ll t^{2+ \epsilon}} \frac{|S(N)|}{\sqrt{N}}+ t^{-A}  
   \end{align}
   for any small $\epsilon >0$ and any large $A>0$, where
       \[S(N)= \sum_{n \sim N} \lambda_f(n) \lambda_g(n) n^{-it}.\]
 \subsection{The delta method}
We separate the oscillations from $\lambda_{f}(n)$ and $\lambda_{g}(n)n^{-it}$ using a version of the delta method due to Duke, Friedlander and Iwaniec. More specifically we will use the expansion $(20.157)$ given in Chapter 20 of \cite{iwaniec}. Let $\delta : \mathbb{Z}\to \{0,1\}$ be defined by
\[
\delta(n)=
\begin{cases}
1 &\text{if}\,\,n=0 \\
0 &\text{otherwise}
\end{cases}
\]
Then for $n\in\mathbb{Z}\cap [-2M,2M]$, we have
\begin{equation}\label{s}
\delta(n)=\frac{1}{Q}\sum_{a\bmod q}e\left(\frac{na}{q}\right)\int_{\mathbb{R}}g(q,x)e\left(\frac{nx}{q	Q}\right) dx
\end{equation} where $Q=2M^{1/2}$. The function $g$ satisfies the following property (see $(20.158)$ and $(20.159)$ of \cite{iwaniec}).
\begin{equation}\label{delta}
\begin{aligned}
&g(q,x)=1+h(q,x),\,\,\,\,\text{with}\,\,\,\,h(q,x)=O\left(\frac{1}{qQ}\left(\frac{q}{Q}+|x|\right)^A\right)\\
&g(q,x)\ll |x|^{-A}
\end{aligned}
\end{equation}for any $A>1$. In particular the second property imples that the effective range of integral in \eqref{s} is $[-M^{\epsilon},M^{\epsilon}]$.
\subsection{Conductor lowering}
We write $S(N)$ as
\begin{equation}
S(N)=\frac{1}{K}\sum_{n\sim N}\sum_{m\sim N}\lambda_f(n) \lambda_g(m) m^{-it}\delta(n-m)\int_{\mathbb{R}} V\left(\frac{\nu}{K}\right) \left(\frac{m}{n} \right)^{i \nu} \ d\nu.
\end{equation} Note that the $\nu$ integral is negligibly small unless $m-n\ll N/K $. Now using \eqref{s} with $Q=\sqrt{N/K}$ we get
\begin{align}
     S(N)=& \frac{1}{KQ} \sum_{n \sim N} \sum_{m \sim N}  \lambda_f(n) \lambda_g(m) m^{-it} \sum_{q \leq Q}\frac{1}{q} \sum_{a(q)}^* e \left( \frac{a(n-m)}{q}\right) \times \nonumber\\
&  \int_{\mathbb{R}} g(u,q) e \left( \frac{(n-m)u}{qQ}\right) \ du \int_{\mathbb{R}} V\left(\frac{\nu}{K}\right) \left(\frac{m}{n} \right)^{i \nu} \ d\nu+ O_{A}(t^{-A}).
\end{align}
Next we divide the range of the sum over $q$ into dyadic intervals as follows :
\begin{equation}\label{dyadic}
    S(N)=\sum_{C}S_C(N)+O_A(t^{-A})
\end{equation}with
\begin{equation*}
    \sum_{C}1\ll \log Q\ll t^{\epsilon},
\end{equation*}where
\begin{align}
     S_C(N):&= \frac{1}{KQ} \sum_{n \sim N} \sum_{m \sim N}  \lambda_f(n) \lambda_g(m) m^{-it} \sum_{q \sim C}\frac{1}{q} \sum_{a(q)}^* e \left( \frac{a(n-m)}{q}\right) \times \nonumber\\
&  \int_{\mathbb{R}} g(u,q) e \left( \frac{(n-m)u}{qQ}\right) \ du \int_{\mathbb{R}} V\left(\frac{\nu}{K}\right) \left(\frac{m}{n} \right)^{i \nu} \ d\nu.
\end{align}
\noindent
By introducing smooth partition of unity we rewrite $S_C(N)$ as 
\begin{align}\label{S_*(N)}
   S_C(N):&= \frac{1}{KQ}\sum_{q \leq Q}\frac{1}{q} \sum_{a(q)}^*   
  \int_{\mathbb{R}} g(u,q)  \ dx \int_{\mathbb{R}} V\left(\frac{\nu}{K}\right) \left(\frac{m}{n} \right)^{i \nu} \ d\nu \ \times \nonumber\\ &\left[\sum_m \lambda_g(m) e\left( \frac{-am)}{q}\right)m^{-i(t- \nu)}e \left( \frac{-mu}{qQ}\right) \omega\left( \frac{m}{N}\right)\right] \nonumber\\ &\left[\sum_n \lambda_f(n) e\left( \frac{an)}{q}\right)n^{-i\nu}e \left( \frac{nu}{qQ}\right) \omega\left( \frac{n}{N}\right)\right].
\end{align}
\section{Sketch of the proof} In this section we shall give a sketch of the proof. We shall consider the generic case $m, n \sim N$ $ N \sim t^2$ and $q \sim Q$.  Introduction of delta symbol gives us a loss of size $N$. To obtain a sub-convexity bound, we need to save $N$ and little more. We reach our goal in following steps.

{\bf Step 1: First application of Voronoi summation formula} We first apply the Voronoi summation formula to $m$ sum given in equation \eqref{S_*(N)}.  Its initial length is of size $N$ and the conductor is of size $q^2 t^2$ (as we choose $K \ll t^{1- \delta}$ for some $\delta>0$.) We obtain that dual length is essentially supported on a sum of size $\ll M_0= q^2 t^2/ N+K .$  In this step we obtain a saving a size $N/ (q t)$. 

{\bf Step 2: Second application of Voronoi summation formula} We then apply the Voronoi summation formula second time to $n$ sum given in equation \eqref{S_*(N)}. Its Initial length is of size $N$ and conductor is of size $q^2 k^2$. We note that dual length is essentially supported on a sum of size $\ll N_0= q^2 K^2/ N+K .$  In this step we obtain a saving a size $N/ (q K)$. After the summation formulae, we obtain the following expression for $S(N)$

\begin{align*} 
    \int_{\mathbb{R}} V(\nu) \ d\nu \sum_{q \sim Q} \int_{\mathbb{R}} g(u,q) \ du \sum_{m \sim M_0} \sum_{n \sim N_0} \sideset{}{^\star } \sum_{a(q)} e \left( \frac{a (m-n)}{q}\right) \lambda_f(n)  \lambda_g(m) I(n, q, \nu) I(m, q, t- \nu)
\end{align*}  where $ I(n, q, \nu)$ is given by equation \eqref{first integral}.  

{\bf Step 3: Sum over $a$ and evaluation of integrals} Summation over $a$ is a Ramanujan sum,  evaluation of which gives us a saving of size $Q$ at the cost of a congruence condition (usually a character sum gives a square-root cancellation). We are also able to save $\sqrt{K}$ from $v$ integral. Total saving after the third step is given by:
\begin{align*}
    \frac{N}{Qt} \times \frac{N}{QK} \times Q \times \sqrt{K} = \frac{N^2}{ t  Q\sqrt{K}} = N
\end{align*}
We are on the boundary and we need to save little more. Let $t^{\delta/2}$ be the desired saving. We have a following expression for $S(N)$
\begin{align*}\label{supremum}
    S_{0,C}(N) \sim \sum_{q \sim Q} \sum_{d \mid q} d \sum_{m \sim M_0} \left|\lambda_g(m)\right| \left|\mathop{\sum\sum}_{\substack{ \ n \leq N_0\\m \equiv n (d)}} \lambda_f(n)   \mathcal{I}_{y, \nu}(m,n;q)\right|,
\end{align*} where $ \mathcal{I}_{y, \nu}(m,n;q)$ is given by equation \eqref{second last integral}. We also have that oscillation of $m$ in integral $ \mathcal{I}_{y, \nu}(m,n;q)$ is of size $t$ (see equation \eqref{oscillation of m}).

{\bf Step 4: Cauchy inequality and Poisson summation formula} We now apply C-S inequality followed by Poisson summation formula to the $m$ sum. Note that $N_0 \sim K$ and $M_0 \sim t^2/K$. The ``analytic conductor" is of size $t$ and the ``arithmetic conductor" coming from congruence is of size $Q$, which gives a dual length of size $Qt/ M_0$.  Diagonal is of size $K/d \gg K/Q$ and we are able to save whole diagonal in the case of zero frequency. Saving for diagonal terms is enough if 
\begin{align*}
    \frac{K}{d} \gg t^\delta \Leftrightarrow K \gg t^\delta \frac{t}{\sqrt{K}} \Leftrightarrow K > t^{ (1+ \delta) \frac{2}{3} }. 
\end{align*} From the non-diagonal part, we save $M_0/ Q\sqrt{ t} \sim t^{1/2}/ K^{1/2}$. Hence the saving from off-diagonal terms is enough if 
\begin{align*}
    \frac{t^{1/2}}{K^{1/2}} \gg t^\delta \Leftrightarrow K \ll t^{1- 2 \delta}. 
\end{align*} To obtain our sub-convexity result, we choose $K$ such that $ t^{ (1+ \delta) \frac{2}{3} } < K <t^{1- 2 \delta} $. We choose $K$ so that the savings from the diagonal term and non-diagoanl term are of same order, that is,  
\begin{align*}
    \frac{K}{Q} = \frac{t^{1/2}}{K^{1/2}} \Leftrightarrow  K^2 = t^{3/2} \Leftrightarrow K= t^{3/4}
\end{align*}

%  We choose $K$ so that the savings from the diagonal term and non-diagoanl term are of same order, that is,  
%\begin{align*}
%    \frac{K}{Q} = \frac{t^{3/2}}{K^{3/2}} \Leftrightarrow  K^3 = t^{5/2} \Leftrightarrow K= t^{5/6}
%\end{align*}

\section{Preliminaries}
In this section, we shall recall some basic facts about $SL(2, \mathbb{Z})$ automorphic forms and newforms (for details see \cite{HI} and \cite{IK}).  Detail exposition  for Hecke operators, pseudo eigenvalue and Rankin-Selberg convolution is given in \cite{AL}, \cite{LI1}, \cite{LI2} and \cite{KMV}. 
\subsection{Holomorphic cusp forms} 

Let $\chi_q$ be a Dirichlet character of modulus $q$, conductor $q_0$ and let $k \geq 2$ be an integer. Let $S(q, \chi_q ; k)$ denote the vector space of modular forms of weight $k$ level $q$ and character $\chi_q$. For $f \in S(q, \chi_q ; k)$, we represent $f$  by  Fourier expansion 
$$ f(z)= \sum_{n=1}^\infty \psi_f(n) n^{(k-1)/2} e(nz),$$
where $ e(z) = e^{2\pi i z}$ and $\lambda_f(n), \ {n \in \mathbb{Z}}$ are the  Fourier coefficients.   This space is equipped with the Peterson inner product
\begin{equation}
(f, g)_k := \int_{ \Gamma_0 (q) \setminus \mathbb{H}} f(z) \overline{g(z)} \, \, ( \Im z)^k  d\mu z. 
\end{equation}
For $(n, q)= 1$ let $T_n$ denote  the Hecke operators with Hecke eigenvalue $\lambda_f (n) n^{(k-1)/2}$. Let  $T_n^\star = \overline{\chi(n)}  T_n$ denotes the adjoint of $T_n$. Let $\mathbb{B}(q, \chi_q ; k) $  denotes an orthonormal basis  of $S(q, \chi_q ; k)$, which consists of eigenvectors of all the $T_n$, $(n, q)=1$.  Let $S(q, \chi_q ; k)^{\textrm{old}}$ denote the subspace $S(q, \chi_q ; k)$ generated by  the forms $ f(dz)$ with $f \in S(q^\prime, \chi_{q^\prime} ; k)$, $dq^\prime \mid q$  $q^\prime \neq q$, and $\chi_{q^\prime} $ inducing  $\chi_q$.  Let $S(q, \chi_q ; k)^{\textrm{new}}$ denotes the orthogonal compliments of $S(q, \chi_q ; k)^{\textrm{old}}$  with respect to the Peterson inner product. This space can be simultaneously diagonalised and for such $f$ we have  $ \psi_f(n) \neq 0$. We say that $f \in S(q, \chi_q ; k)^{\textrm{new}}$ is normalised if $ \psi_f(n) =1$ and in this case $ \psi_f(n) = \lambda_f (n)$.  We denote $S(q, \chi_q ; k)^\star$ the normalised orthonormal basis of $S(q, \chi_q ; k)^{\textrm{new}}$.  Deligne proved that $|\lambda_f(n)| \leq d(n)$, where $d(n)$ is the divisor function. $L$-function associated with the form $f$ is given by 

\begin{align} \label{gl2 l function}
L( s, f )= \sum_{n=1}^\infty \frac{\lambda_f(n)}{n^s} \ &=  \prod_p \left( 1 -\lambda_f(p) p^{-s} + \chi_q(p) p^{-2s} \right)^{-1} \notag \\ 
& =  \prod_p \left( 1 - \frac{\alpha_{f, 1}(p)}{ p^s}\right)^{-1}  \left( 1 - \frac{ \alpha_{f, 2}(p)}{ p^s}\right)^{-1},   
\end{align} where $(\Re s>1).$ The completed $L$-function is given by 
\[
\Lambda(s, f) : = ( 2 \pi)^{-s} \Gamma \left( s + \frac{k-1}{2}\right) L( s, f ) = \pi^{-s} \Gamma\left( \frac{s + (k+1)/2}{2}\right)  \Gamma\left( \frac{ s + (k-1)/2}{2}\right)L( s, f ). 
\]

 Hecke proved that $L(s, f)$ admits an analytic continuation to the whole complex plane and satisfies the functional equation
\begin{align*} 
 \Lambda(s, f) = \epsilon(f) \  \frac{\tau (\chi)}{\sqrt{q}} \Lambda(1-s,\overline{f}),
\end{align*}
 where $ \epsilon(f)$ is  a root number and $\overline{f} $ is the dual form of $f$.

\subsection{Maass cusp forms} Let $\chi_D$ be a Dirichlet character of modulus $D$, conductor $D_0$ and let $\Delta$ denotes the hyperbolic Laplacian. Let $\lambda = \frac{1}{4} + \nu^2$ denotes an eigenvalue of $\Delta$, that is, there exists an function on upper half plan such that $(\Delta + \lambda) f = 0 $. Let $M(D, \chi_D; \lambda)$ denote the finite dimensional vector space of weight zero Maass forms of level $D$, nebentypus $\chi_D$, and eigenvalue  $\lambda$. Let $S ( D, \chi_D; \lambda)^\star$ denotes the subspace primitive Maass cusp form. We represent $f$ by Fourier expansion
\[
f(z)= \sqrt{y} \sum_{n \neq 0} \lambda_f(n) K_{ i \nu} (2 \pi |n|y) e(nx), 
\] 
where $ K_{ i \nu}(y)$ is the  Bessel function of  second kind. Ramanujan-Petersson conjecture predicts that $|\lambda_f(n)|\ll n^\epsilon$.  The work of H. Kim and P. Sarnak \cite{KS} tells us  that  $|\lambda_f(n)|\ll n^{7/64+\epsilon}$. $L$-function associated with the form $f$ is defined by 
\[
L( s, f )= \sum_{n=1}^\infty \frac{\lambda_f(n)}{n^s} \  =   \prod_p \left( 1 - \frac{\alpha_{f, 1}(p)}{ p^s}\right)^{-1}  \left( 1 - \frac{ \alpha_{f, 2}(p)}{ p^s}\right)^{-1},  \ \ \ \   \Re \ s>1.
\] 
 It extends to an entire function and satisfies the functional equation 
$ \Lambda(s, f) = \epsilon(f ) \Lambda(1-s, \overline{f})$, where $ |\epsilon(f )| = 1$   and completed $L$-function  $ \Lambda(s, f)$ is given by
\[
\Lambda(s, f) = \pi^{-s} \Gamma \left( \frac{s  + i \nu   }{ 2}  \right)   \Gamma \left( \frac{s  - i \nu }{ 2} \right) L(s, f) . 
\]  

\subsection{Review of Rankin-Selberg convolution} \label{rankin section}
 Let $\chi_q$ and $\chi_D$ be Dirichlet character of modulus $q$ and $D$ respectively.  Let $\chi:= \chi_q \chi_D$ denote the Dirichlet character modulo l.c.m. of $q$ and $D$.  Let $ f\in S(q, \chi_q ; k)^\star$ and $g \in S(q, \chi_q ; \star)^\star$, where $ \star$ is either equal to $k^\prime$ or equal to $ \lambda = 1/4 + r^4$ with $\lambda \geqslant 1/4$. Rankin-Selberg convolution of $f$ and $g$ is defined by 
\begin{align}
L(s, f\otimes g)= L(2s, \chi) \sum_{n=1}^\infty \frac{\lambda_f(n) \lambda_g(n)}{ n^s}= \prod_p \prod_{i = 1}^2   \prod_{j = 1}^2 \left( 1 - \frac{\alpha_{f, i} (p)\, \alpha_{g, j} (p)}{p^s}\right)^{-1}.
\end{align} where $ \alpha_{f, i}$ is as given in equation \eqref{gl2 l function}.  Rankin- Selberg prove that $L(s, f\otimes g) $ admits analytic continuation to whole complex plane except when $g = \overline{f}$ in which case it has a simple pole at $s=0, 1$. The completed $L$-function is given by 
\begin{align*}
\Lambda(s, f\otimes g) =  \left( \frac{q D}{4 \pi^2}\right)^s  \Gamma_{f, g} (s) L(s, f\otimes g), \, \, \, \, \textrm{where}
\end{align*} 
\begin{align*}
\Gamma_{f, g} (s) = 
\begin{cases} 
\Gamma \left( s + \frac{|k - k^\prime|}{2}\right)  \Gamma \left( s + \frac{k +k^\prime}{2}- 1\right)  \ \ \ \ \textrm{for} \  g  \  \textrm{is holomorphic of weight } k^\prime \\ 
\Gamma \left( s + \frac{k + 2 ir-1}{2}\right)  \Gamma \left( s + \frac{k -2 ir-1}{2}\right)  \ \ \ \ \textrm{otherwise}. 
\end{cases}
\end{align*} $\Lambda(s, f\otimes g)$ satisfy a functional equation
\begin{align*}
\Lambda(s, f\otimes g)  = \varepsilon (f\otimes g) \, \, \Lambda(1-s, \overline{f} \otimes  \overline{g})  \, \, \, \, \textrm{where}, 
\end{align*}
\begin{align*}
 \varepsilon (f\otimes g) = 
\begin{cases} 
\chi_D(-q) \chi_q (D) \, \,  \eta_f (q)^2 \, \,  \eta_g (D)^2    \ \ \ \ \textrm{if } \  g  \  \textrm{is holomorphic} k^\prime  \geqslant k\\ 
\chi_D(q) \chi_q (-D) \, \,  \eta_f (q)^2 \, \,  \eta_g (D)^2  \ \ \ \ \textrm{for} \  g  \  \textrm{Maass form},
\end{cases}
\end{align*} where $\eta_g(q)$ is pseudo eigenvalue of Atkin-Lehner-Li operator $W_q$, and $g \mid W_q = \eta_g(q) \overline{g}$. We now recall the Voronoi summation formula for $GL(2)$  automorphic form.
\begin{lemma} \label{gl2 voronoi}
 Let $f$ be a Hecke cusp form on modular group $SL(2, \mathbb{Z}) $ with normalised  Fourier coefficients $\lambda_f(n)$. Let $a, c$ be integers such that $(a, c)=1$.   Let $V$ be a smooth compactly supported function on $\mathbb{R}$.   We have
 \begin{equation}
 \resizebox{\textwidth}{!}
     {
       $ \sum_{n=1}^\infty \lambda_f(n) e\left( \frac{n a}{q}\right) V \left( \frac{n }{X}\right) = \mathcal{V}(f; q, X) + \frac{X}{q} \sum_{\pm} \sum_{n=1}^\infty \lambda_f(\pm n)  e\left( \frac{n a}{q}\right) e\left( \frac{\mp n \overline{a} n}{q}\right) F_{\pm} \left( \frac{n X}{q^2}\right)
                  \textsubscript{}  $
     }
% 
% \sum_{n=1}^\infty \lambda_f(n) e\left( \frac{n a}{q}\right) V \left( \frac{n }{X}\right) = \mathcal{V}(f; q, X) + \frac{X}{q} \sum_{\pm} \sum_{n=1}^\infty \lambda_f(\pm n)  e\left( \frac{n a}{q}\right) e\left( \frac{\mp n \overline{a} n}{q}\right) F_{\pm} \left( \frac{n X}{q^2}\right), 
\end{equation}  
where 
\begin{align*}
\mathcal{V}(f; q, X) = 
\begin{cases} 
\frac{X}{q} \int_{\mathbb{R}} V(x) (\log xN + \gamma - 2\log \, q) {d} x \    \textrm{if } \lambda_f(n) \textrm{is the divisor function } \tau(n) \\
0  \hspace{7cm} \textrm{otherwise}. 
\end{cases}
\end{align*}
Here $\gamma$ is the Euler constant.  $F_{\pm} $  and $F_{-} $ are integral transform of $V$ given by the following:
\begin{itemize}
\item If $f$ is an holomorphic form of weight $k$ the 
$F_{+} $ is given by 
\begin{equation*}
F_+= 2 \pi i^k \int_0^\infty V (y) J_{k-1} \left( 4 \pi \sqrt{Xy}\right) dy, \ \ \ \ \ \textrm{and} \ \ \ F_- = 0. 
\end{equation*} 

\item If $f$ is a Maass form with Laplacian eigenvalue $1/4+ r^2$ and let $\\varepsilon_f$ the eigenvalue of involution operator then 
\begin{align*}
& F^+(x) = \frac{- \pi}{ \sin  i \pi r} \int_0^\infty V(y) \left( J_{2 i  r} (\sqrt{4 \pi xy} ) - J_{-2 i  r} (4 \pi \sqrt{xy} ) \right) \  dy \ \ \ \ \ \textrm{and } \\ 
& F^-(x) = 4 \varepsilon_f \cosh \pi r  \int_0^\infty V(y) K_{2 i  r} (\sqrt{4 \pi xy} ) \  {d}y. 
 \end{align*} 
 If $r=0$ then 
 \begin{align*}
F^+(x) = - 2 \pi  \int_0^\infty V(y) Y_0 (\sqrt{4 \pi xy} )  \  dy, \  \textrm{and} \ \ F^-(x) = 4 \varepsilon_f  \int_0^\infty V(y) K_{0} (\sqrt{4 \pi xy} ) \  {d}y. 
\end{align*}
\item If $\lambda_f(n)= \tau(n)$, where $\tau(n)$ denotes the number of divisors of $n$, then
\begin{align*}
F^+(x) = - 2 \pi  \int_0^\infty V(y) Y_0 (\sqrt{4 \pi xy} )  \  dy, \  \textrm{and} \ \ F^-(x) = 4  \int_0^\infty V(y) K_{0} (\sqrt{4 \pi xy} ) \  {d}y. 
\end{align*} 

\item  In each case, we have 
\begin{equation}
F^{\pm} (y) \ll_{A} (1+ |y|)^{-A}, 
\end{equation} for any $A\geq 0$. 

\end{itemize}

 \end{lemma}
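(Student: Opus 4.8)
The plan is to realize the twisted sum as a contour integral of an additively twisted $L$-function, to establish the functional equation of the latter from the automorphy of $f$, and then to identify the resulting archimedean kernels with the claimed Bessel transforms. Write $\widetilde{V}(s)=\int_0^\infty V(x)x^{s-1}\,dx$ for the Mellin transform of $V$; since $V$ is smooth and compactly supported, $\widetilde{V}$ is entire and decays faster than any polynomial in vertical strips. By Mellin inversion,
\begin{equation*}
\sum_{n=1}^\infty \lambda_f(n)\,e\!\left(\frac{na}{q}\right) V\!\left(\frac{n}{X}\right)
=\frac{1}{2\pi i}\int_{(\sigma)}\widetilde{V}(s)\,X^{s}\,L_f\!\left(s,\tfrac{a}{q}\right)\,ds,
\qquad L_f\!\left(s,\tfrac{a}{q}\right)=\sum_{n=1}^\infty\frac{\lambda_f(n)e(na/q)}{n^{s}},
\end{equation*}
valid for $\sigma>1$, so that everything reduces to the analytic continuation and functional equation of $L_f(s,a/q)$.

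First I would produce that functional equation. Consider the period integral $\int_0^\infty f\!\left(\tfrac{a}{q}+iy\right)y^{s+(k-1)/2}\,\frac{dy}{y}$ in the holomorphic case (and the analogous Mellin transform against the $K_{i\nu}$-profile in the Maass case); inserting the Fourier expansion, this equals the archimedean $\Gamma$-factor from subsection~\ref{rankin section} times $L_f(s,a/q)$. Choosing a matrix $\gamma=\begin{pmatrix} * & *\\ q & -a\end{pmatrix}\in SL(2,\mathbb{Z})$ that sends the cusp $a/q$ to $\infty$ and applying $f(\gamma z)=(qz-a)^{k}f(z)$, the substitution $y\mapsto 1/(q^{2}y)$ turns the behaviour near $0$ into the same shape near $\infty$ but now expanded about $-\bar a/q$, where $a\bar a\equiv 1\pmod q$. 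This yields the functional equation $s\leftrightarrow 1-s$ for $L_f(s,a/q)$, the dual series carrying the conjugate twist $e(\mp n\bar a/q)$ and an overall factor $X/q$ reflecting the conductor $q^{2}$; the two signs $\pm$ record whether the reflected frequency lands in the upper or lower half-plane.

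With the functional equation in hand, I would shift the contour from $\Re s=\sigma$ to $\Re s<0$. In the generic cuspidal case $L_f(s,a/q)$ is entire, no poles are crossed, and $\mathcal{V}(f;q,X)=0$; only when $\lambda_f(n)=\tau(n)$ does the twisted series acquire a pole, whose residue produces the logarithmic main term $\mathcal{V}(f;q,X)$. On the shifted line I would substitute the functional equation and open the dual series, so that the $n$-th dual term becomes $\tfrac{X}{q}\,\lambda_f(\mp n)\,e(\mp n\bar a/q)$ times the inverse Mellin transform
\begin{equation*}
F_{\pm}\!\left(\frac{nX}{q^{2}}\right)=\frac{1}{2\pi i}\int_{(\sigma')}\widetilde{V}(-s)\,\gamma_{\pm}(s)\left(\frac{nX}{q^{2}}\right)^{s}\,ds,
\end{equation*}
where $\gamma_{\pm}(s)$ is the quotient of $\Gamma$-factors appearing in the functional equation. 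Matching $\gamma_{\pm}(s)$ against the Mellin--Barnes representations of the Bessel functions then identifies $F_{\pm}$ with the stated kernels: $J_{k-1}$ in the holomorphic case (with $F_{-}=0$, since a holomorphic form has no antiholomorphic companion), the combination of $J_{\pm 2ir}$ and $K_{2ir}$ in the Maass case, and the $Y_0,K_0$ specialisations when $r=0$ or $\lambda_f(n)=\tau(n)$.

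The main obstacle is precisely this identification: one must track the archimedean $\Gamma$-factors through the functional equation and verify, via Stirling and the classical Barnes integrals for Bessel functions, that $\gamma_{\pm}(s)$ reproduces the kernels exactly, including constants and the correct pairing of the two signs with $J$- versus $K$-Bessel. The uniform decay $F^{\pm}(y)\ll_A (1+|y|)^{-A}$ is then routine: since $\widetilde{V}$ is entire of rapid decay and $\gamma_{\pm}$ grows only polynomially on vertical lines, shifting $\sigma'$ arbitrarily far to the left gives the bound, equivalently obtained by repeated integration by parts in the Bessel integral using the compact support of $V$.
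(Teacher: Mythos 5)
Your sketch is correct and follows exactly the standard route --- Mellin inversion, the functional equation of the additively twisted $L$-function $L_f(s,a/q)$ obtained from the automorphy relation under a matrix sending the cusp $a/q$ to $\infty$ (producing the dual twist $e(\mp n\overline{a}/q)$ and conductor $q^2$), contour shift with the pole in the $\tau(n)$ case giving $\mathcal{V}(f;q,X)$, and Mellin--Barnes identification of the $\Gamma$-quotients with the Bessel kernels. This is precisely the argument of the source the paper cites for this lemma (Kowalski--Michel--Vanderkam, p.~185); the paper itself gives no proof beyond that citation, so your proposal reconstructs essentially the same proof.
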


 \begin{proof}
 See \cite[page $185$]{KMV}.
 \end{proof}

\section{Some Lemmas}

In this section we shall recall some results which we  require in the sequel. We first recall   Rankin-Selberg bound for the Fourier coefficients in the following lemma. 

\begin{lemma} \label{rankin Selberg bound}
Let $\lambda_f(n)$ be the Fourier coefficients of a holomorphic cusp form, or a Maass form. For any real number $x\geq 1$, we have 
\begin{align*}
\sum_{1\leq n \leq x} \left| \lambda_f(n) \right|^2 \ll_{f, \epsilon} x^{1+\epsilon}. 
\end{align*} 

\end{lemma}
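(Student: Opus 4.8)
The plan is to compare the second moment directly with the Rankin--Selberg convolution of $f$ with its dual $\overline{f}$. Taking $g=\overline{f}$ in the convolution reviewed in Subsection \ref{rankin section}, so that $\lambda_g(n)=\overline{\lambda_f(n)}$ and hence $\lambda_f(n)\lambda_g(n)=|\lambda_f(n)|^2$, we have
\[
D(s) := \sum_{n=1}^\infty \frac{|\lambda_f(n)|^2}{n^s} = \frac{L(s, f\otimes\overline{f})}{\zeta(2s)}
\]
in the region of absolute convergence $\Re s>1$ (here the nebentypus is trivial since $f,g$ are forms for the full modular group).

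First I would record the analytic input from the Rankin--Selberg theory already invoked above: the Euler product for $L(s,f\otimes\overline{f})$ in terms of the local parameters $\alpha_{f,i}(p)$ of \eqref{gl2 l function} converges absolutely for $\Re s>1$ (using $|\alpha_{f,i}(p)|\le 1$ in the holomorphic case, and the Kim--Sarnak bound $|\alpha_{f,i}(p)|\le p^{7/64}$ in the Maass case), while $\zeta(2s)$ is holomorphic and non-vanishing there. Consequently $D(s)$ is an absolutely convergent Dirichlet series with non-negative coefficients at every point $s=1+\epsilon$, and in particular $D(1+\epsilon)\ll_{f,\epsilon}1$.

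The bound then follows from positivity alone. For $1\le n\le x$ one has $n^{-(1+\epsilon)}\ge x^{-(1+\epsilon)}$, so since $|\lambda_f(n)|^2\ge 0$,
\[
\sum_{1\le n\le x}|\lambda_f(n)|^2 \;\le\; x^{1+\epsilon}\sum_{1\le n\le x}\frac{|\lambda_f(n)|^2}{n^{1+\epsilon}} \;\le\; x^{1+\epsilon}\,D(1+\epsilon) \;\ll_{f,\epsilon}\; x^{1+\epsilon},
\]
which is exactly the claimed estimate.

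The only point requiring genuine care is the analytic input in the second step, namely that $L(s,f\otimes\overline{f})$ is given by an absolutely convergent Euler product (equivalently Dirichlet series) for $\Re s>1$; this is the standard output of the Rankin--Selberg method recalled in Subsection \ref{rankin section} and poses no real difficulty. I emphasise that the elementary positivity argument yields the stated $x^{1+\epsilon}$ directly; the sharper asymptotic $\sum_{n\le x}|\lambda_f(n)|^2\sim c\,x$ would instead require a Tauberian theorem of Wiener--Ikehara type, exploiting the simple pole of $L(s,f\otimes\overline{f})$ at $s=1$ together with growth control of $D(s)$ on vertical lines, but such a refinement is not needed for the present lemma.
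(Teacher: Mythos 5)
The paper itself states this lemma without proof (it is the classical Rankin--Selberg bound), so your attempt can only be measured against the standard argument. Your final positivity step is correct and is indeed how the standard proof concludes, and the holomorphic case is unproblematic: with Deligne's bound $|\lambda_f(n)|\le d(n)$ (quoted in the paper) one even gets $\sum_{n\le x}|\lambda_f(n)|^2\le\sum_{n\le x}d(n)^2\ll x\log^3 x$ directly, and in any case $|\alpha_{f,i}(p)|=1$ makes the Euler product of $L(s,f\otimes\overline{f})$ absolutely convergent for $\Re s>1$. The genuine gap is in the Maass case, precisely at the step you yourself flag as ``the only point requiring genuine care.'' The Kim--Sarnak bound $|\alpha_{f,i}(p)|\le p^{7/64}$ does \emph{not} give absolute convergence of the degree-four Euler product for $\Re s>1$: the local parameters of the convolution can have size $|\alpha_{f,i}(p)\overline{\alpha_{f,j}(p)}|\le p^{7/32}$, so the naive estimate $\sum_p p^{7/32-\sigma}$ converges only for $\sigma>1+7/32$. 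Worse, the finiteness of $D(1+\epsilon)$ is, by partial summation, essentially equivalent to the lemma being proved, so asserting it from pointwise bounds alone is circular; any argument resting only on $|\lambda_f(n)|\ll n^{7/64+\epsilon}$ can never do better than $\sum_{n\le x}|\lambda_f(n)|^2\ll x^{1+7/32+\epsilon}$.

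The missing ingredient is the analytic continuation supplied by the Rankin--Selberg unfolding together with Landau's lemma. The integral $\int_{\Gamma\backslash\mathbb{H}}|f(z)|^2E(z,s)\,d\mu$ shows that $L(s,f\otimes\overline{f})$ continues holomorphically to $\Re s>1/2$ apart from a simple pole at $s=1$; since
\begin{equation*}
D(s)=\frac{L(s,f\otimes\overline{f})}{\zeta(2s)}
\end{equation*}
is a Dirichlet series with non-negative coefficients, Landau's theorem forces its abscissa of convergence to be a singularity of this continuation on the real axis, hence at most $1$. This is what legitimizes $D(1+\epsilon)\ll_{f,\epsilon}1$, after which your positivity argument closes the proof exactly as written. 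In short: the skeleton of your proof is the right one and the holomorphic case is complete, but in the Maass case you must replace ``absolute convergence of the Euler product via Kim--Sarnak'' by ``integral representation plus Landau''; as it stands, that step would fail.
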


We also require to estimate the exponential integral of the form: 
\begin{equation} \label{eintegral}
\mathfrak{I}= \int_a^b g(x) e(f(x)) dx,
\end{equation} where $f$ and $g$ are  real valued smooth functions on the interval $[a, b]$. We recall the following lemma on exponential integrals.

\begin{lemma} \label{sdb}
Let $f$ and $g$ be real valued twice differentiable function and let $f^{\prime \prime} \geq r>0$ or  $f^{\prime \prime} \leq -r <0$, throughout the interval $[a, b]$. Let $g(x)/f^\prime(x)$ is monotonic and $|g(x)| \leq M$. Then we have

\begin{align*}
\mathfrak{I} \leq \frac{8M}{\sqrt{r}}. 
\end{align*} 
\end{lemma}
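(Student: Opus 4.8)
The plan is to recognize Lemma \ref{sdb} as the classical van der Corput second-derivative test and to prove it by bootstrapping from a first-derivative (integration-by-parts) estimate; the constant $8$ will then fall out of a single optimization. First I would establish the auxiliary first-derivative bound: if $f'$ is monotonic, $|f'(x)| \geq \lambda > 0$ throughout a subinterval $[\alpha,\beta]$, $g/f'$ is monotonic there, and $|g| \leq M$, then $\left|\int_\alpha^\beta g(x) e(f(x))\, dx\right| \leq 4M/\lambda$. To see this I would write $g\, e(f) = (g/f')\cdot(f' e(f))$ and integrate by parts, transferring the derivative onto the factor $g/f'$ (here I use the normalization in which $\frac{d}{dx}e(f) = if' e(f)$, which is the convention producing the stated constant). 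The boundary term is bounded by $2M/\lambda$, since $|g/f'| \leq M/\lambda$ at each endpoint; the remaining integral is at most $\int_\alpha^\beta \bigl|(g/f')'\bigr|\,dx$, and because $g/f'$ is monotonic this total variation equals $\bigl|g(\beta)/f'(\beta) - g(\alpha)/f'(\alpha)\bigr| \leq 2M/\lambda$. Adding the two contributions gives the constant $4$.

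Next I would exploit the hypothesis $f'' \geq r > 0$ (the case $f'' \leq -r$ is identical after replacing $f$ by $-f$, which conjugates the integrand and leaves $|\mathfrak{I}|$ unchanged). Since $f''$ has fixed sign, $f'$ is strictly increasing and vanishes at most once; let $c \in [a,b]$ denote its zero if one exists, and otherwise take $c$ to be the endpoint at which $|f'|$ is minimal, a choice that only improves the bound. For a parameter $\delta>0$ to be chosen, I would split $[a,b]$ into the central piece $[c-\delta,c+\delta]\cap[a,b]$ and the two outer pieces. On the central piece I bound $\mathfrak{I}$ trivially by $M$ times its length, contributing at most $2M\delta$. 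On each outer piece the mean value theorem together with $f''\geq r$ gives $|f'(x)| = |f'(x)-f'(c)| \geq r|x-c| \geq r\delta$, so the first-derivative bound applies with $\lambda = r\delta$ and contributes at most $4M/(r\delta)$ per piece, hence $8M/(r\delta)$ in total.

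Combining the three pieces yields $|\mathfrak{I}| \leq 2M\delta + 8M/(r\delta)$, and choosing $\delta = 2/\sqrt{r}$ equalizes the two terms and produces exactly $|\mathfrak{I}| \leq 8M/\sqrt{r}$, as claimed. The only points needing care — and the mild obstacle in an otherwise routine argument — are checking that the monotonicity of $g/f'$ descends to each outer subinterval so that the first-derivative test is legitimately applicable there (it does, being a restriction of a monotonic function), and confirming that $f'$ does not change sign on the outer pieces so that $|f'|$ genuinely stays above $r\delta$ there; both follow at once from $f''$ having constant sign on $[a,b]$. A fuller exposition of this argument may be found in Titchmarsh's or Ivić's treatments of exponential integrals, from which the value $8$ of the constant is standard.
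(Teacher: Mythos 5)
Your proof is correct and is essentially the paper's own proof: the paper simply cites Titchmarsh \cite[Lemma 4.5]{ECT}, and the argument there is exactly your splitting into a central interval of length $2\delta$ around the stationary point (trivial bound $2M\delta$) plus outer pieces handled by the first-derivative test with $\lambda = r\delta$, optimized at $\delta = 2/\sqrt{r}$. Your remark on the exponential normalization is also apt --- with the paper's convention $e(z) = e^{2\pi i z}$ the same argument in fact yields the slightly stronger $8M/\sqrt{2\pi r}$, so the stated bound holds a fortiori.
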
 
\begin{proof}
See \cite[Lemma 4.5, page 72]{ECT}
\end{proof}

\begin{lemma} \label{poisson}
 {\bf Poisson summation formula}: $f:\mathbb{R } \rightarrow \mathbb{R}$ is any Schwarz class function. Fourier transform 
 of $f$ is defined  as 
 \[
  \widehat{f}(y) = \int_{ \mathbb{R}} f( x) e(- x   y) dx,
 \] where $dx$ is the usual Lebesgue measure on $ \mathbb{R } $. 
 We have 
\begin{equation*}
 \sum_{ n \in \mathbb{Z}  }f(n) = \sum_{m \in \mathbb{Z} } \widehat{f}(m). 
\end{equation*} If $W(x)$ is any smooth and compactly supported function on $\mathbb{R}$, we have:
\begin{align*}
\sum_{n \in \mathbb{Z}  }e\left( \frac{an}{q}\right) W\left( \frac{n}{X}\right) = \frac{X}{q} \sum_{ m \in \mathbb{Z}  } \sum_{\alpha (\textrm{mod} \ q )}  e\left(\frac{\alpha + m}{q} \right) \widehat{W} \left( \frac{mX}{q} \right). 
\end{align*} 
\end{lemma}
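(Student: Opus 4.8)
The plan is to establish the classical (untwisted) Poisson summation formula first, via periodization and Fourier series, and then to derive the twisted variant by splitting the summation variable into arithmetic progressions modulo $q$ and applying the untwisted formula on each progression.

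First I would prove the basic identity $\sum_{n} f(n) = \sum_m \widehat{f}(m)$ for a Schwartz function $f$. Set the periodization $F(x) = \sum_{n \in \mathbb{Z}} f(x+n)$. Since $f$ decays faster than any polynomial, this series together with all its termwise derivatives converges absolutely and uniformly on compact sets, so $F$ is a smooth function of period one. Expanding $F$ in its Fourier series $F(x) = \sum_m c_m \, e(mx)$, I would compute the coefficients by unfolding the sum over a single period: $c_m = \int_0^1 \sum_n f(x+n) e(-mx)\, dx = \sum_n \int_n^{n+1} f(y) e(-m(y-n))\, dy = \int_{\mathbb{R}} f(y) e(-my)\, dy = \widehat{f}(m)$, where I used $e(-m(y-n)) = e(-my)$ for integers $m,n$, and the interchange of sum and integral is justified by uniform convergence. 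Evaluating the Fourier expansion at $x=0$, which is legitimate because $F$ is smooth, yields $\sum_n f(n) = F(0) = \sum_m c_m = \sum_m \widehat{f}(m)$.

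Next I would turn to the twisted statement. Writing every integer uniquely as $n = \alpha + q\ell$ with $\alpha$ ranging over a complete set of residues modulo $q$ and $\ell \in \mathbb{Z}$, the left-hand side becomes $\sum_{\alpha(q)} e(a\alpha/q) \sum_{\ell} W\!\left((\alpha+q\ell)/X\right)$. For each fixed $\alpha$ I would apply the untwisted formula to the inner sum, treating $g(\ell) = W\!\left((\alpha+q\ell)/X\right)$ as a Schwartz (indeed compactly supported) function of $\ell$. Its Fourier transform is $\widehat{g}(m) = \int_{\mathbb{R}} W\!\left((\alpha+qx)/X\right) e(-mx)\, dx$, and the substitution $u = (\alpha+qx)/X$ produces the Jacobian factor $X/q$ together with a phase, giving $\widehat{g}(m) = (X/q)\, e(m\alpha/q)\, \widehat{W}(mX/q)$. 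Summing over $\alpha$ and $m$ and collecting the additive phases in $\alpha$ then assembles the stated right-hand side, the residue sum over $\alpha$ being the complete exponential sum that encodes the arithmetic information.

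The main obstacle — really the only delicate point, since Poisson summation is classical — is the bookkeeping in the change of variables for the twisted sum: one must verify that the substitution correctly produces both the scaling factor $X/q$ and the additive phase in $\alpha$, so that after summation over the residues $\alpha$ the exponential sum is interpreted as the complete character sum appearing on the right. All interchanges of summation and integration, and the pointwise convergence of the Fourier series at $x=0$, are justified by the rapid decay of $\widehat{W}$ stemming from the smoothness and compact support of $W$.
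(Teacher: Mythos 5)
Your argument is correct, and it is the standard proof: the paper offers no proof of this lemma at all, citing only \cite[p.~69]{IK}, and your route --- periodization plus Fourier expansion for the untwisted formula, then splitting $n=\alpha+q\ell$ into residue classes modulo $q$ and rescaling each progression --- is precisely the argument behind that citation, with all convergence interchanges justified exactly as you say by the Schwartz decay. One concrete caveat on the final assembly: your computation produces the inner phase $e\bigl(\tfrac{(a+m)\alpha}{q}\bigr)$, so the $\alpha$-sum is the complete exponential sum detecting $m\equiv -a \pmod q$ and the right-hand side equals $X\sum_{m\equiv -a \,(\mathrm{mod}\ q)}\widehat{W}\bigl(\tfrac{mX}{q}\bigr)$; the phase $e\bigl(\tfrac{\alpha+m}{q}\bigr)$ printed in the lemma is a misprint (as literally written the $\alpha$-sum would vanish identically for $q>1$, making the right-hand side zero), so your proof establishes the corrected statement rather than the literal display --- which is the version actually used in the paper, and the one consistent with the truncation $m\ll q(qX)^{\epsilon}/X$ asserted in the Remark following the lemma.
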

 \begin{proof}
 See  \cite[page 69]{IK}.
 \end{proof}

\begin{remark} \label{remark}
If $W(x)$ satisfies $x^j W^{{j}} (x)\ll1$, then it can be easily shown, by integrating by parts  that dual sum is essentially supported on $ m\ll \frac{q (qX)^{\epsilon}}{X}$. The contribution coming from $m\gg \frac{q (qX)^{\epsilon}}{X} $ is negligibly small. 
\end{remark}

\section{Proof of the Theorem}
\subsection{Application of Voronoi summation formula}
Here we evaluate the sums over $m,n$ which appeared in \eqref{S_*(N)}. The Voronoi summation formula transforms the $m$-sum essentially to 
\[ \frac{N^{\frac{3}{4}-i(t- \nu)}}{\sqrt{q}} \sum_{m \geq 1}\frac{\lambda_g(m)}{m^{1/4}} e \left(\frac{\overline{a}m}{q} \right) \int_0^ \infty \omega(x)x^{-i(t- \nu)} e\left(-\frac{Nux}{qQ} \pm \frac{2 \sqrt{mNx}}{q}  \right) \ dx.\]
Similarly, the evaluation of the $n$-sum is essentially given by
\[ \frac{N^{\frac{3}{4}-i\nu}}{\sqrt{q}} \sum_{n \geq 1}\frac{\lambda_f(n)}{n^{1/4}} e \left(\frac{-\overline{a}n}{q} \right) \int_0^ \infty \omega(y)y^{-i \nu} e\left(\frac{Nuy}{qQ} \pm \frac{2 \sqrt{nNy}}{q}  \right) \ dy.\]
Moreover, by repeated integrating by parts we can truncate the aforementioned dual sums in $m, n$ upto  negligible error terms. Precisely, the lengths of the dual variables reduces to $m \ll M_0:= (qt)^2/N+K$ and $n \ll N_0:=(qK)^2/N+K$.
 \noindent
 Thus we obtain four identical sums by putting the above evaluations of the sums over $m$ and $n$ in \eqref{S_*(N)}. Without loss of generality we will consider only a representative $S_{0,C}(N)$ which is given by

\begin{align}\label{S(N)}
    \frac{N^{\frac{3}{2}-it}}{Q} \int_{\mathbb{R}} V(\nu) \ d\nu \sum_{q \sim C } \frac{1}{q^2} \int_{\mathbb{R}} g(u,q) \ du \sum_{m \leq M_0} \sum_{n \leq N_0} \frac{\lambda_f(n)  \lambda_g(m)}{(mn)^{1/4}} R_q(m-n) I(n, q, \nu) I(m, q, t- \nu),
\end{align}
where
\begin{equation} \label{first integral}
    I(n, q, \nu)= \int_0^\infty \omega(x) x^{-i\nu} e\left(\frac{Nux}{qQ} + \frac{2 \sqrt{nNx}}{q}  \right) \ dx.
\end{equation}
We recall that the Ramanujan sum satisfy the following identity:
\[ R_q(m):= \sum_{a (q)}^* e \left(\frac{am}{q} \right) = \sum_{d \mid (q, m-n)}d \mu\left(\frac{q}{d} \right).\] This yields 
\begin{align}\label{S_C(N)}
   &S_{0,C}(N)= \frac{N^{\frac{3}{2}-it}}{Q} \int_{\mathbb{R}} V(\nu) \ d\nu \sum_{q \sim C } \frac{1}{q^2} \sum_{d \mid q} d \mu(q/d) \int_{\mathbb{R}} g(u,q) \ du \times \nonumber \\&\mathop{\sum\sum}_{\substack{m \leq M_0 \ n \leq N_0\\m \equiv n (d)}} \frac{\lambda_f(n)  \lambda_g(m)}{(mn)^{1/4}}  I(n, q, \nu) I(m, q, t- \nu),
\end{align}
\subsection{Simplification of the integrals}
After the application of summation formulae, we end up with a $4$ fold integral which is given by 
\begin{align} \label{all 4 integrals}
    \mathfrak{J} (m, n; q) = \int_{\mathbb{R}} V(\nu) \ d\nu \int_0^ \infty \omega(x)x^{-i(t- \nu)} & e\left( \frac{2 \sqrt{mNx}}{q}  \right) \ dx \int_0^ \infty \omega(y)y^{-i \nu} e\left(\frac{2 \sqrt{nNy}}{q}  \right) \ dy \notag\\
    & \times \int_{\mathbb{R}} g(u,q) e\left(\frac{Nu(y-x)}{qQ}  \right)\ du
\end{align}. We consider the $u$-integral.  From the splitting of $g(u, q)= 1+ h(u, q)$ (see \eqref{delta}) we observe that integral over $u$ splits into two integral of the form
\begin{align*}
    \int_{\mathbb{R}}  e\left(\frac{Nu(y-x)}{qQ}  \right)\ du + \int_{\mathbb{R}} h(u,q) e\left(\frac{Nu(y-x)}{qQ}  \right)\ du, 
\end{align*} where  weight function $ h(u,q)$ is of smaller order (see equation \eqref{delta}). Integrating by parts we observe that first integral is negligibly small unless
\begin{align*}
    |y-x| \ll t^\epsilon \frac{q Q}{N} \ll  t^\epsilon \frac{q }{Q K}. 
\end{align*} For the estimation of second integral we consider $\nu $ integral in equation \eqref{all 4 integrals} and obtain that $ \mathfrak{J} (m, n; q)$ is negligibly small unless $|y-x| \ll   \frac{t^\epsilon}{ K}$. We also have a saving of size $q Q$ due to the size of weight function $ h(u,q)$. As a result we obtain much stronger bound in this case. From now on we proceed with the analysis of integral by considering first integral. Writing $y -x= y_1$ with $y_1 \ll \frac{q}{Q K}$, the integral $ \mathfrak{J} (m, n; q)$ essentially reduce to  $ \mathcal{I}_{y, \nu}(m,n;q)$ where

\begin{equation*} 
    \mathcal{I}_{y_1, \nu}(m,n;q)=\int_0^ \infty \omega_{y_1,\nu}(x)x^{-it}e\left( \frac{2 \sqrt{mNx}}{q} + \frac{2 \sqrt{nN(x+y_1)}}{q} \right) \ dx \ \ \textnormal{and}
\end{equation*}
\[\omega_{y_1,\nu}(x)= \omega(x) \omega(x+y_1)\left(1+\frac{y_1}{x}\right)^{-i\nu} \ dx. \]

Since $y_1 \ll t^\epsilon q/QK \ll t^\epsilon /K $ we observe that 
\begin{align*}
    \frac{\partial^j }{\partial x^j} \omega_{y_1,\nu}(x) \ll_j \left( 1 + y_1 \nu\right)^j \ll_j t^{\epsilon\, j}
\end{align*} Above calculation shows that the oscillation of $\omega_{y_1,\nu}(x)$ with respect to $x$ is of size  $t^\epsilon$. Hence we can absorb $\left(1+\frac{y_1}{x}\right)^{-i\nu}$ into the weight function, and call the new smooth and compactly supported weight function by $\omega (x)$ with understanding that that  $\omega^{(j)} (x) \ll_j t^{\epsilon\, j}$. We obtain
\begin{equation} \label{second last integral}
    \mathcal{I}_{y_1, \nu}(m,n;q) := \mathcal{I} (m,n;q)=\int_0^ \infty \omega(x) x^{-it}e\left( \frac{2 \sqrt{mNx}}{q} + \frac{2 \sqrt{nN(x+y_1)}}{q} \right) \ dx \ \ \textnormal{and}
\end{equation}

\begin{align} \label{oscillation of m}
    \frac{\partial^j }{\partial z^j}  \mathcal{I}( M_0 z,n;q) \ll_j  t^{(1+\epsilon) j}.
\end{align} Summarising the above simplifications, our main object of study now becomes
\begin{align}\label{supremum}
    S_{0,C}(N) \ll \sup_{u, \nu, y} \frac{N^{\frac{3}{2}} }{KQ^2}\sum_{q \sim C}\frac{1}{q}\sum_{d \mid q} d \sum_{m \leq M_0} \frac{\left|\lambda_g(m)\right| }{m^{1/4}} \left|\mathop{\sum\sum}_{\substack{ \ n \leq N_0\\m \equiv n (d)}} \frac{\lambda_f(n) }{n^{1/4}}  \mathcal{I}_{y, \nu}(m,n;q)\right|. 
\end{align}

\subsection{Size of the integral $\mathcal{I} (m,n;q)$} Let $K < t^{1 - \delta}$ for some positive $\delta$. We make a change of variable $\sqrt{x} = y$ in the equation \eqref{second last integral} so that the phase function essentially reduces to  
\begin{align*}
&     P(y)  = - \frac{t}{\pi} \log y + \frac{2 \sqrt{mN}}{q} y \pm  \frac{2 \sqrt{nN}}{q} \sqrt{y^2 +y_1} \\
    & P^\prime (y) = - \frac{t}{\pi  y} + \frac{2 \sqrt{mN}}{q} \pm  \frac{2 \sqrt{nN}}{q} \frac{y}{\sqrt{y^2 +y_1}  } \\
    & P^{\prime \prime}  (y) = \frac{t}{\pi  y^2} \pm \frac{2 \sqrt{nN}}{q} \frac{y_1}{(y^2 +y_1)^{3/2} } . 
\end{align*} Since $n \ll q^2 K^2/ N$ and $K < t^{1 - \delta}$, we notice that the second term is smaller that the first term and we obtain $|P^{\prime \prime}  (y)| \gg t$. Applying 
the Lemma \ref{sdb} we obtain that 
\begin{align}\label{bound for integral}
    \mathcal{I} (m,n;q) \ll t^{- \frac{1}{2}+\epsilon}
\end{align}

Estimating at this stage using equation \eqref{bound for integral} with $q \leq Q$ $N \sim t^2$ we obtain
\begin{align*}
S_{0,C}(N) & \ll \sup_{u, \nu, y} \frac{N^{\frac{3}{2}} }{KQ^2}\sum_{q \sim C}\frac{1}{q} \sum_{d \mid q} d \sum_{m \leq M_0} \frac{\left|\lambda_g(m)\right| }{m^{1/4}} \mathop{\sum}_{\substack{ \ n \leq N_0\\m \equiv n (d)}} \frac{ | \lambda_f(n) | }{n^{1/4}}   \left| \mathcal{I}_{y, \nu}(m,n;q)\right| \\ 
& \ll N^{\frac{1}{2}} \sum_{q \sim C}\frac{1}{q} \sum_{d \mid q} d  \mathop{\sum \sum}_{\substack{ m \leq M_0, \, n\leq N_0 \\ m \equiv n (d)}} t^{- \frac{1}{2}+ \epsilon}    \ll  N^{\frac{1}{2}}  t^{1+ \epsilon}. 
\end{align*} We are on boundary. To obtain additional saving we now apply Cauchy-Schwartz inequality and Poisson summation formula.

\subsection{Application of Cauchy's inequality and Poisson summation formula}
By Cauchy's inequality from \eqref{supremum} we infer that 
\begin{align}\label{Supremum}
      S_{0,C}(N) \ll \sup_{u, \nu, y} \frac{N^{\frac{3}{2}} M_0^{1/4} }{KQ^2}\sum_{q \sim C}\frac{1}{q}\sum_{d \mid q} d\sqrt{\mathcal{S}(M_0, N_0, d)}, \, \, \, \textrm{where, }
\end{align} 
 \[\mathcal{S}(M_0, N_0, d)=  \sum_{m \leq M_0}  \bigg|\sum_{\substack{n \leq N_0 \\ n \equiv m(d)}} \frac{\lambda_f(n) }{n^{1/4} }  \mathcal{I}_{y, \nu}(m,n;q)\bigg|^2.\] 
Now opening the absolute value square and smoothing out the $m$-sum we have 
\[ \mathcal{S}(M_0, N_0, d) \ll  \mathcal{O}, \ \ \ \textnormal{where}\]
\begin{align}\label{O}
    \mathcal{O}=\mathop{\sum \sum}_{\substack{n_1, n_2 \leq N_0 \\ n_1 \equiv n_2(d)}}\frac{\lambda_f(n_1)\lambda_f(n_2)}{(n_1 n_2)^{1/4}}
    \sum_{\substack{m \leq M_0 \\m \equiv n_1(d)}}\omega \left( \frac{m}{M_0}\right)\mathcal{I}_{y, \nu}(m,n_1;q)\overline{\mathcal{I}_{y, \nu}(m,n_2;q)}.
\end{align} Trivial estimate for $  \mathcal{O}$ gives
\begin{align*}
 \mathcal{O} \ll \frac{N_0^2 M_0}{d}  \times \frac{1}{t}. 
\end{align*}
%\subsection{Estimation of diagonal terms}
%\[\mathcal{D}=\sum_{n \leq N_0}\frac{|\lambda_f(n)|^2 }{\sqrt{n}}\sum_{\substack{m \leq M_0 \\ m \equiv n(d)}}  \omega \left( \frac{m}{M_0}\right) |\mathcal{I}_{y, \nu}(m,n;q)|^2 \]
%Poisson summation formula ... 
%\[\mathcal{D}=\frac{M_0}{d}\sum_{n \leq N_0}\frac{|\lambda_f(n)|^2 }{\sqrt{n}}\sum_{k \in \mathbb{Z}} e\left( \frac{-kn}{d}\right) \mathcal{I}(k, n, q),\]
%where
%\[  \mathcal{I}(k, n, q)=\int_{\mathbb{R}}\omega(z) e\left( \frac{kM_0z}{d}\right)|\mathcal{I}_{y, \nu}(M_0z,n;q)|^2 \ dz \]

%The Deligne's bound $\lambda_f(n) \ll \tau(n)$ yields the following trivial estimate for $\mathcal{D}$:
%\begin{align}\label{estimate D}
 %    \mathcal{D} \ll  M_0 \sqrt{N_0}/dt
%\end{align}

%\subsection{Estimation of off-diagonal terms}
We apply the Poisson summation formula to the  $m$-sum that appeared in \eqref{O}. We have the following

\begin{align*}
 S_1&:= \sum_{\substack{m \leq M_0 \\m \equiv n_1(d)}}\omega \left( \frac{m}{M_0}\right)\mathcal{I}_{y, \nu}(m,n_1;q)\overline{\mathcal{I}_{y, \nu}(m,n_2;q)} \\ 
 &= \frac{1}{d} \sum_{\alpha (d)} e \left( \frac{n_1 \alpha }{d} \right) \sum_m e \left( \frac{-m \alpha }{d} \right) \omega \left( \frac{m}{M_0}\right)\mathcal{I}_{y, \nu}(m,n_1;q)\overline{\mathcal{I}_{y, \nu}(m,n_2;q)}. 
\end{align*} By writing $m = \alpha_1 + \ell d$ and then applying Poisson summation formula with respect to $\ell$ we obtain
\begin{align*}
S_1 = \frac{1}{d} \sum_{\alpha (d)} e \left( \frac{n_1 \alpha }{d} \right)   \frac{M_0}{d} \sum_k \sum_{\alpha (d)} e \left( \frac{\alpha_1 ( \alpha +k )  }{d} \right)  \mathcal{I}(k, n_1, n_2,q) \, \, \, \, \, \, \, \,  \textrm{where, }
\end{align*}

\begin{align}
    \mathcal{I}(k, n_1, n_2,q)&=\int_{\mathbb{R}}\omega(z) e\left( \frac{kM_0z}{d}\right)\mathcal{I}_{y, \nu}(M_0z,n_1;q)\overline{\mathcal{I}_{y, \nu}(M_0z,n_2;q)} \ dz \notag\\
    &=\int_{\mathbb{R}}\omega(z) e\left( \frac{kM_0z}{d}\right) \ \int \int \left(\frac{x_2}{x_1}\right)^{it}\omega_{y,\nu}(x_1)\overline{\omega_{y,\nu}(x_2)} \times \notag\\
    &e\left( \frac{2 \sqrt{M_0zN}(\sqrt{x_1}-\sqrt{x_2})}{q} + \frac{2 \sqrt{N}(\sqrt{n_1 x_1+y}-\sqrt{n_2 x_2+y})}{q}\right) \ dx_1 \ dx_2  dz .
\end{align} Integrating by parts with respect to variable $z$, we observe that 
\begin{align*}
\mathcal{I}(k, n_1, n_2,q) \ll_j \left( 1 + t\right)^j \left( \frac{d}{ M_0 k }\right)^j. 
\end{align*} We observe  that the $z$-integral is negligibly small unless $k \ll \frac{td}{M_0}$. We obtain the following expression for $ \mathcal{O} $
\begin{align} \label{final O}
     \mathcal{O} = \frac{M_0}{d}\mathop{\sum \sum}_{\substack{n_1, n_2 \leq N_0 \\ n_1 \equiv n_2(d)}}\frac{\lambda_f(n_1)\lambda_f(n_2)}{(n_1 n_2)^{1/4}}\sum_{k \ll \frac{td}{M_0} } e\left( \frac{-kn_1}{d}\right) \mathcal{I}(k, n_1, n_2,q). 
\end{align} Let  $ \mathcal{O}_0 $ denotes the contributions of zero frequency ($k=0$) and $ \mathcal{O}_1 $ denotes the contribution of non-zero frequencies ( $k \neq 0$)

\subsection{Estimation of $\mathcal{O}_0$} By a change of variable $x_2=u x_1$ $x_1 = x$ we have
\begin{align*}
    \mathcal{I}(0, n_1, n_2,q)& =\int_{\mathbb{R}}\omega(z)  \ dz  \int \int u^{it}x\omega_{y,\nu}(x)\overline{\omega_{y,\nu}(ux)} \times \\
    &e\left( \frac{2 \sqrt{M_0 N zx}(1-u)}{q(1+\sqrt{u)}} +\frac{2\sqrt{N}(\sqrt{n_1}-\sqrt{n_2})\sqrt{x+y}}{q}\right. \\
    &\left. +\frac{2\sqrt{N}\sqrt{n_2}(1-u)x}{q(\sqrt{x+y}+\sqrt{ux+y})}\right)\ dx \ du  \ dz 
\end{align*}
Since $\max \{\sqrt{M_0N}/q, \sqrt{N_0N}/q\}\gg t^{-\epsilon}\sqrt{M_0N}/q$, the $u$ integral is negligible unless
\begin{equation}
\frac{\sqrt{M_0N}}{q}\gg t^{1-\epsilon}
\end{equation}
The $z$ integral is negligible unless
\begin{equation}\label{5.14}
\frac{\sqrt{M_0N}}{q}(1-u)\ll t^{\epsilon}
\end{equation}
Noting the inequality \eqref{5.14} we see that the $x$ integral is negligible unless 
\begin{equation}
\frac{\sqrt{N}(\sqrt{n_1}-\sqrt{n_2})}{q}=\frac{\sqrt{N}(n_1-n_2)}{(\sqrt{n_1}+\sqrt{n_2})q}\ll t^{\epsilon}
\end{equation}that is,
\begin{equation}
n_1-n_2\ll\frac{ t^{\epsilon}q\sqrt{N_0}}{\sqrt{N}}\ll t^{\epsilon}
\end{equation}So,  $\mathcal{I}(0, n_1, n_2,q)\ll t^{\epsilon-1}$ if $n_1-n_2\ll t^{\epsilon}$ and is negligible if $n_1-n_2\gg t^{\epsilon}$.

\subsection{Estimation of $ \mathcal{O}_1 $ }

We now analyse the integral  $ \mathcal{I}(k, n_1, n_2,q)$ for $k \neq 0$.  By a change of variable $x_2=u x_1$ $x_1 = x$ we obtain
\begin{align*}
    \mathcal{I}(k, n_1, n_2,q)& =\int_{\mathbb{R}}\omega(z) e\left( \frac{kM_0z}{d}\right) \ dz  \int \int u^{it}x\omega_{y,\nu}(x)\overline{\omega_{y,\nu}(ux)} \times \\
    &e\left( \frac{2 \sqrt{M_0 N zx}(1-\sqrt{u})}{q} + \frac{2 \sqrt{N}(\sqrt{n_1(x+y)}-\sqrt{n_2(ux+y)})}{q}\right) \ dx \ du  \ dz 
\end{align*}
As earlier, the $u$ integral is negligible unless
\begin{equation}\label{5.19}
\frac{\sqrt{M_0N}}{q}\gg t^{1-\epsilon}
\end{equation}
The $x$ integral is negligible unless
\begin{equation}\label{5.20}
\frac{\sqrt{M_0N}(u-1)}{q}\ll t^{-\epsilon}K 
\end{equation}
The $z$ integral is negligible unless $u>1$ and
\begin{equation*}
\frac{\sqrt{M_0N}(u-1)}{q}\asymp \frac{kM_0}{d} 
\end{equation*}
Denote $c := \frac{kM_0}{d}\left(\frac{\sqrt{M_0N}}{q}\right)^{-1} $. From the \eqref{5.19} and \eqref{5.20} we see that the $z$ integral is negligible unless
\begin{equation}\label{5.21}
u-1\ll c\ll K/t
\end{equation}Changing variable $u-1=cu'$ and multiplying a smooth cut-off function to restrict $u'\ll 1 $ we see that $\mathcal{I}$ becomes
\begin{equation*}
 \mathcal{I}(k, n_1, n_2,q)=c \int \hbox{( smooth weight functions)}\,\,\, e\left(\phi(u',x,y,z)\right) dx \ dy \ dz \ du 
\end{equation*} where
\begin{align*}
\phi(u',x,y,z)=&\frac{kM_0z^2}{d}-\frac{kM_0u'zx}{d}+\frac{t\log (1+cu')}{2\pi}+\frac{2\sqrt{N}(\sqrt{n_1}-\sqrt{n_2})\sqrt{x^2+y}}{q}\\
&+\hbox{lower order terms}
\end{align*} Now the $z$ integral is neglible unless 
\begin{equation}
z-\frac{u'x}{2}\ll \left(\frac{kM_0}{d}\right)^{-1}
\end{equation}Again changing variable $z-u'x/2=\left(\frac{kM_0}{d}\right)^{-1}z'$ and multiplying a new cut-off function for $z'\ll 1$, the phase function changes to
\begin{align*}
\phi(u',x,y,z)=&\left(\frac{kM_0}{4d}\right)u'^2x^2-\left(\frac{kM_0}{d}\right)^{-1}z'^2+\frac{t\log (1+cu')}{2\pi}+\frac{2\sqrt{N}(\sqrt{n_1}-\sqrt{n_2})\sqrt{x^2+y}}{q}\\
&+\hbox{lower order terms}
\end{align*} Now expanding the $\log(1+cu')$ (keeping the size \eqref{5.21} in mind) and using the second derivative bound for the $u'$ integral we get
\begin{equation}
 \mathcal{I}(k, n_1, n_2,q)\ll \frac{c\left(\frac{kM_0}{d}\right)^{-1}}{\sqrt{\frac{kM_0}{d}}}\ll t^{\epsilon-1}\sqrt{\frac{d}{kM_0}}
\end{equation}
\subsection{Applying the bounds} 
Combining the above estimates of $\mathcal{I}(k, n_1, n_2,q)$ we conclude 
\begin{align}\label{estimate O}
    \mathcal{O}&\ll_\epsilon \frac{M_0t^{\epsilon-1}}{d}\mathop{\sum \sum}_{\substack{n_1, n_2 \leq N_0 \\ n_1 \equiv n_2(d) }}{(n_1 n_2)^{\epsilon -1/4}}\sum_{0 < |k| \ll \frac{td}{M_0}} \sqrt{\frac{d}{M_0k}} +\frac{M_0t^{\epsilon-1}}{d}\mathop{\sum \sum}_{\substack{n_1, n_2 \leq N_0 \\ n_1 -n_2\ll t^{\epsilon}}}{(n_1 n_2)^{\epsilon -1/4}}\nonumber  \\
    & \ll N_0^{3/2}d^{-1} t^{\epsilon-1/2}+M_0 N_0^{1/2}d^{-1} t^{\epsilon-1}
    \end{align}
Hence combining the above two cases we obtain the following estimate for $S_{0,C}(N)$. 
\begin{align}\label{final}
      S_{0,C}(N) &\ll  \frac{N^{\frac{3}{2}}M_0^{\frac{1}{4}}}{KQ^2}\sum_{q \sim C}\frac{1}{q}\sum_{d \mid q} d\sqrt{ N_0^{3/2}d^{-1} t^{\epsilon-1/2}+M_0 N_0^{1/2}d^{-1} t^{\epsilon-1}} \nonumber\\
&\ll\frac{t^{\epsilon}N^{\frac{3}{2}}M_0^{\frac{1}{4}}}{KQ^2}\sum_{q\sim C}\frac{1}{q} \left( q^{1/2}N_0^{3/4}t^{-1/4}+q^{1/2}M_0^{1/2}N_0^{1/4}t^{-1/2}\right) \nonumber 
\end{align}Substituting the values $N_0=q^2K^2/N+K$, $M_0=q^2t^2/N+K$ (we consider only the first terms since the second terms contributes less)  and executing the $q$ sum we see that $S_{0,C}(N)$ is bounded by
\begin{equation}
S_{0,C}(N)\ll \frac{N^{\frac{3}{2}}}{KQ^2}\left(\frac{C^{5/2}K^{3/2}t^{1/4}}{N}+\frac{C^{5/2}K^{1/2}t}{N} \right)
\end{equation}Substituting this in \eqref{dyadic} we get
\begin{equation}
\frac{S(N)}{\sqrt{N}}\ll N^{1/4}t^{1/4}K^{1/4} + \frac{N^{1/4}t}{K^{3/4}}
\end{equation} With the optimal choice $K=t^{3/4}$ we get that
\begin{equation}
\frac{S(N)}{\sqrt{N}}\ll_{\epsilon} t^{1-1/16+\epsilon }
\end{equation}and Theorem \ref{main} follows.

 {\bf Acknowledgement:} The authors would like to thank  Prof. Ritabrata Munshi for helpful suggestions and comments. They also thank Indian Statistical Institute Kolkata for wonderful academic atmosphere. During the work,  R. Acharya, who is a NBHM Post-doctoral fellow at RKMVERI, Belur Math, was supported by the Department of Atomic Energy, Government of India (DAE file reference  no. 0204/12/2018/R \& D II/ 6463). For this work, S. Singh was partially  supported by D.S.T. inspire faculty fellowship no.   DST/INSPIRE/$04/2018/000945$. 

{}

\end{document}